\documentclass[12pt,leqno]{amsart}
\newtheorem{theorem}{Theorem}[section]
\newtheorem{definition}{Definition}[section]

\newtheorem{proposition}{Proposition}[section]
\newtheorem{remark}{Remark}[section]

\newtheorem{example}{Example}[section]

\newcommand{\be}{\begin{equation}}
\newcommand{\ee}{\end{equation}}
\numberwithin{equation}{section}
\newcommand{\bea}{\begin{eqnarray}}
\newcommand{\eea}{\end{eqnarray}}
\newcommand{\beb}{\begin{eqnarray*}}
\newcommand{\eeb}{\end{eqnarray*}}
\usepackage{amssymb,amsfonts,amsthm,setspace,indentfirst}
\usepackage[dvips]{graphics}
\usepackage{epsfig}
\begin{document}
\title{On $\mathcal{I}^{st}$- convergence in the space of reals}
\author{Amar Kumar banerjee$^{1}$ and Khairul Hasan$^{2}$}
\address{$^{1}$,$^{2}$ Department of Mathematics, The University of Burdwan, Golapbag, Burdwan-713104, West Bengal, India.} 
\email{$^{1}$akbanerjee@math.buruniv.ac.in, akbanerjee1971@gmail.com}
\email{$^{2}$khairul9734@gmail.com}
\begin{abstract}
 In this paper, we study the notion of $\mathcal{I}$$^{st}$-convergence of sequences of real numbers which is an extension work of $\mathcal{I}$$^*$-convergence on real sequences. We also define the convergence field of $\mathcal{I}$$^{st}$-convergence and  $\mathcal{I}$$^{st}$-limit points and we discuss some properties related to these notions.
\end{abstract}
\subjclass[2020]{40A05, 40A35.}
\keywords{$\mathcal{I}$-convergent, $\mathcal{I}^*$- convergent, $\mathcal{I}^{st}$-convergence, convergence field of $\mathcal{I}^*$-convergence, convergence field of $\mathcal{I}^{st}$- convergence, $\mathcal{I}^{st}$-limit points.}
\maketitle
\section{\bf{Introduction }}
The concept of $\mathcal{I}^*$-convergence came out from a result \cite{TS} on statistical convergence of real sequences. The result is as follows:
A real sequence $\{x_n\}$ is statistically convergent to $\eta$ if and only if there exists a set $M=\{m_1<m_2<...<m_k<...\}$ such that $d(M)=1$ and $\displaystyle \lim_kx_{m_k}=\eta$, i.e., the subsequence $\{x_n\}_{n\in M}$ is convergent to $\eta$ in ordinary sense. So, the questions naturally arise what would happen if we consider the idea of statistical convergence of the subsequence $\{x_{m_k}\}$ (i.e., $\{x_n\}_{n\in M}$) of $\{x_n\}$ instead of the ordinary convergence of $\{x_{m_k}\}$ and this has led to investigate the occurrence of new phenomena on convergence. Keeping in mind, we have used the concept of statistical limit of the subsequence $\{x_n\}_{n\in M}$ in the definition of $\mathcal{I^*}$-convergence and then we have observed what changes have been come out in the results of $\mathcal{I^*}$-convergence. With this motion, we have moved our attention towards this direction. In this paper, we have studied the notion of $\mathcal{I}^{st}$-convergence as a parallel notion of $\mathcal{I}^*$-convergence. We have introduced the idea of $\mathcal{I}^{st}$-convergence using the concept of statistical convergence and $\mathcal{I}^*$-convergence. Let us briefly discuss these two types of convergence.\\
The concept of statistical convergence was given by H.Fast \cite{HF} and was later developed by many authors (see \cite{{AS},{FRIDY},{PK1},{KS},{TS},{HS}} etc.). Actually, the idea of statistical convergence is a generalization of the usual convergence of a real sequence. Now we recall the definition of the natural density of a set $B\subseteq \mathbb{N}$ where $\mathbb{N}$ denotes the set of natural numbers. Let $B_n=\{b\in B: b\leq n\}$ and $|B_n|$ denotes the cardinality of $B_n$. The natural density of $B$ is defined by $d(B)=\displaystyle \lim_{k\to \infty}\frac{|B_n|}{n}$ if the limit exists. A real sequence $\{x_n\}$ is said to be statistical convergence to $\eta$ if for every $\epsilon>0$, $d(A(\epsilon))=0$ where $A(\epsilon)=\{n\in \mathbb{N}:|x_n-\eta|\geq \epsilon\}$. It is clear that every ordinary convergence is always statistical convergence, but the converse may not be true.\\
After long years,the notion of statistical convergence has been extended to $\mathcal{I}$, $\mathcal{I^*}$-convergence by Kostyko et al. and also to strong-$\mathcal{I^K}$-convergence \cite{{AM2}}, $\mathcal{I}$-divergence and $\mathcal{I^*}$-divergence \cite{{AA}} etc. using the concept of ideals of subsets of natural numbers. A real sequence $\{x_n\}$ is said to be $\mathcal{I^*}$-convergent to $\eta$ if there exists a set $M=\{m_1<m_2<...<m_k<...\}\in \mathcal{F(I)}$ ( $\mathcal{F(I)}$ is the filter associated with the ideal $\mathcal{I}$ ) such that $\displaystyle \lim_k x_{m_k}=\eta$. It should be mentioned that, if $\{x_n\}$ is $\mathcal{I^*}$-convergent to $\eta$ then it is $\mathcal{I}$-convergent, but converse may not be true. However, the converse holds if an additional condition, known as `AP condition', holds which has been given in Remark 4.1. We have divided this paper into five sections. The concept of $\mathcal{I}^{st}$-convergence of real sequence is introduced and several fundamental results are studied. The necessary and sufficient condition for the equivalence of $\mathcal{I}$-convergence and $\mathcal{I}^{st}$-convergence is also established. The idea of a convergence field of $\mathcal{I}^{st}$-convergence has been given and we have also studied the notion of $\mathcal{I}^{st}$-limit points and found some relevant properties.

\section{\bf{Preliminaries}}
Let $X$ be a non-empty set. We denote $2^X$ as the power set of $X$.
Then a family of sets $\mathcal{I} \subset 2^X$ is said to be an ideal if 
(i) $A,B \in \mathcal{I} \Rightarrow A \cup B \in \mathcal{I}$ and
(ii) $A \in \mathcal{I}, B \subset A \Rightarrow B \in \mathcal{I}$. In view of (ii), the empty set $\phi \in \mathcal{I}$. \\ 
$\mathcal{I}$ is called a non-trivial ideal if $\mathcal{I} \neq 2^X,\{\phi\}$. A non-trivial ideal $\mathcal{I}$ in $X$ is called admissible if $\{x\} \in \mathcal{I}$ for each $x \in X$. Clearly, the family $\mathcal{F(I)}=\{A \subset X: X \setminus A \in \mathcal{I}\}$ is a filter on $X$ which is called the filter associated with $\mathcal{I}$. We note that the set of all finite subsets of $\mathbb{N}$ is an ideal which we denoted by $\mathcal{I}_f$ and the set of all subsets of $\mathbb{N}$ whose density are zero is also an ideal which we denoted by $\mathcal{I}_d$.

Let $\mathcal{I}$ be a non-trivial ideal of $\mathbb N$. A sequence $\{x_n\}$ in $\mathbb R$ is said to be $\mathcal{I}$-convergent to $x$ if for every $\varepsilon>0$, the set $A(\varepsilon)=\{n \in \mathbb N: |x_n-x| \geq \varepsilon\} \in \mathcal{I}$. \\
If $\{x_n\}$ is $\mathcal{I}$-convergent to $x$, then $x$ is called $\mathcal{I}$-limit of $\{x_n\}$ and we write $\mathcal{I}-\displaystyle \lim_nx_n=x$.

Let $\mathcal{I}$ be an admissible ideal in $\mathbb N$. A sequence $\{x_n\}$ of real numbers is said to be $\mathcal{I^*}$-convergent to $x$ (shortly $\mathcal{I^*}-\displaystyle \lim_n x_n=x$) if there is a set $M=\{m_1 < m_2 < ... \}\in \mathcal {F(I)}$ such that $\displaystyle \lim_{k\to\infty}x_{m_k}=x$. Clearly, if $\{x_n\}$ is ordinary convergent to $x$ then it is $\mathcal{I}_f$-convergent to $x$ and if $\{x_n\}$ is statistically convergent to $x$ then it is $\mathcal{I}_d$-convergent to $x$.

\section{\bf{$\mathcal{I}^{st}$-convergence}}
\begin{definition}(cf\cite{{SD}})
 Let $\mathcal{I}$ be an admissible ideal in $\mathbb N$. A sequence $\{x_n\}$ of real numbers is said to be $\mathcal{I}^{st}$-convergent to $x$, written as $\mathcal{I}^{st}-\displaystyle \lim_n x_n=x$, if there is a set $M=\{m_1 < m_2 < ... \}\in \mathcal {F(I)}$ such that $ st-\displaystyle \lim_{k\to\infty}x_{m_k}=x$.
\end{definition}

\begin{remark}
   The idea of $\mathcal{I}^*$-statistical convergence in a metric space given by Debnath and Rakshit \cite{SD} is as follows: A sequence $x=\{x_n\}_{n\in\mathbb{N}}$ of elements of $X$ is said to be $\mathcal{I}^*$-statistical convergent to $\xi\in X$ if and only if there is a set $M=\{m_1 < m_2 < ...<m_k<... \}\in \mathcal {F(I)}$ such that $ st-\displaystyle \lim_{k\to\infty}d(x_{m_k},\xi)=0$. In this paper our definition of $\mathcal{I}^{st}$-convergence is the same as that of $\mathcal{I}^*$-statistical convergence \cite{SD}. But our approach, direction, and mode of work are different from that of \cite{SD}. Here we have used the idea of statistical convergence of the subsequence $\{x_{n_k}\}$ of $\{x_n\}$ in place of ordinary convergence of $\{x_{n_k}\}$ in the definition of $\mathcal{I}^*$-convergence. As per our motivation the notation $\mathcal{I}^{st}$-convergence seems to be more appropriate  than $\mathcal{I}^*$- statistical convergence. That's why we have used different notation `$\mathcal{I}^{st}$-convergence' in place of `$\mathcal{I}^*$-statistical convergence'.
\end{remark}

\begin{theorem}
    For any sequence $\{x_n\}$, $\mathcal{I}^{st}-\displaystyle \lim_n x_n$ is unique.
\end{theorem}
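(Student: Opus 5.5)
The plan is to argue by contradiction: assume the $\mathcal{I}^{st}$-limit is not unique and produce a single index $n$ at which $x_n$ is close to both limits. Suppose $\mathcal{I}^{st}-\lim_n x_n=x$ and $\mathcal{I}^{st}-\lim_n x_n=y$ with $x\neq y$, and put $\varepsilon=|x-y|/3>0$. By the definition of $\mathcal{I}^{st}$-convergence there are sets $M_1=\{m_1<m_2<\cdots\}\in\mathcal{F(I)}$ and $M_2=\{p_1<p_2<\cdots\}\in\mathcal{F(I)}$ such that $st-\lim_k x_{m_k}=x$ and $st-\lim_k x_{p_k}=y$.

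First I translate statistical convergence of the subsequences into statements about index sets. Let
\[
K_1=\{k: |x_{m_k}-x|\geq\varepsilon\},\qquad K_2=\{k:|x_{p_k}-y|\geq\varepsilon\}.
\]
Then $d(K_1)=d(K_2)=0$. Hence the "good" sets
\[
A_1=\{m_k: k\notin K_1\}\subseteq M_1,\qquad A_2=\{p_k:k\notin K_2\}\subseteq M_2
\]
have relative density $1$ inside $M_1$ and inside $M_2$ respectively. Next, since $\mathcal{F(I)}$ is a filter, $M=M_1\cap M_2\in\mathcal{F(I)}$. Since $\mathcal{I}$ is admissible and non-trivial, $M$ is infinite: otherwise $\mathbb{N}=(\mathbb{N}\setminus M)\cup M$ would lie in $\mathcal{I}$.

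The contradiction comes from any $n\in A_1\cap A_2$. For such $n$ we have
\[
|x-y|\le |x_n-x|+|x_n-y|<2\varepsilon<|x-y|,
\]
which is impossible. So everything reduces to showing $A_1\cap A_2\neq\emptyset$. It suffices to show that $A_1\cap M$ is not contained in $M_1\setminus A_2$. I would attempt this by comparing counting functions: look at the elements of $M$ up to $N$ along $M_1$ and along $M_2$, and use that the exceptional sets $M_1\setminus A_1$ and $M_2\setminus A_2$ have relative density zero in their ambient sets.

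This step is the main obstacle, and I expect it to fail as stated. Density zero relative to $M_1$ says nothing about the part of $M$ that is a relatively null subset of $M_1$. If $M_1\cap M_2$ is sparse inside both $M_1$ and $M_2$, then the exceptional sets can swallow all of $M$, and $A_1\cap A_2=\emptyset$ is then possible. The intersection step goes through only if $M$ occupies a positive upper proportion of $M_1$ or of $M_2$. For an arbitrary admissible ideal I do not see how to guarantee this from the filter property and admissibility alone. So I do not have a proof of the statement as worded; I would first look for an explicit admissible $\mathcal{I}$ and two sets in $\mathcal{F(I)}$ with sparse intersection, since such a pair would yield a counterexample.
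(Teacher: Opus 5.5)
Your diagnosis is right, and the obstacle you isolated is fatal rather than technical. For a general admissible ideal the statement is false, so the step $A_1\cap A_2\neq\emptyset$ cannot be proved.

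The paper's proof breaks at exactly this point. It asserts $d(P)=d(Q)=1$ for subsets $P,Q$ of $M=M_1\cap M_2$ ``since $M\subseteq M_1,M_2$''. But statistical convergence along $M_1$ only makes the exceptional indices null relative to the enumeration of $M_1$, and $M$ itself may have density zero.

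Here is the counterexample you were about to look for. Let $S=\{j^2:j\in\mathbb{N}\}$ and $\mathcal{I}=\{A\subseteq\mathbb{N}: A\cap S\ \text{is finite}\}$. This is an admissible ideal, and $\mathcal{F(I)}$ consists of the sets containing all but finitely many squares. Let $E$ and $O$ be the even and odd numbers. Then $M_1=E\cup S$ and $M_2=O\cup S$ belong to $\mathcal{F(I)}$, and $M_1\cap M_2=S$. Put $x_n=0$ for $n\in E$ and $x_n=1$ for $n\in O$. Write $M_1=\{m_1<m_2<\cdots\}$. The indices $k\le K$ with $x_{m_k}\neq0$ correspond to odd squares not exceeding $m_K$, so there are at most $\sqrt{m_K}$ of them, while $K\ge\lfloor m_K/2\rfloor$. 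Hence $st-\lim_{k\to\infty}x_{m_k}=0$. Symmetrically, the subsequence along $M_2$ is statistically convergent to $1$. So $\{x_n\}$ has the two $\mathcal{I}^{st}$-limits $0$ and $1$. In the paper's notation, $M=S$ and $P$, $Q$ are the even and the odd squares, both of density zero, so no contradiction arises.

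What is missing is therefore a hypothesis, not an idea. Uniqueness holds precisely when no member of $\mathcal{F(I)}$ has density zero, i.e.\ when $\mathcal{I}\vee\mathcal{I}_d$ is a proper ideal; this covers, for instance, every d-admissible $\mathcal{I}$. Under that hypothesis your reduction works:
\begin{itemize}
\item Let $c_N=|M_i\cap[1,N]|\le N$, and let $K_i$ be the set of exceptional indices for $M_i$. Then $|(M_i\setminus A_i)\cap[1,N]|=|K_i\cap[1,c_N]|=o(c_N)=o(N)$, so $M_i\setminus A_i$ has density zero in $\mathbb{N}$.
\item Hence $\mathbb{N}\setminus A_i=(\mathbb{N}\setminus M_i)\cup(M_i\setminus A_i)\in\mathcal{I}\vee\mathcal{I}_d$.
\item If $A_1\cap A_2=\emptyset$, then $\mathbb{N}=G\cup H$ with $G\in\mathcal{I}$ and $d(H)=0$, so $\mathbb{N}\setminus G\in\mathcal{F(I)}$ would have density zero.
\end{itemize}
Conversely, the construction above works verbatim with any density-zero $S\in\mathcal{F(I)}$ in place of the squares, so the condition is also necessary.

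Note also that the relevant smallness is the absolute density of members of $\mathcal{F(I)}$, not the proportion of $M$ inside $M_1$ or $M_2$. Take $M_1=S$, $M_2=E\cup S$, and $x_n=1$ on $S$, $x_n=0$ off $S$. Then $M=M_1$, yet the two subsequential statistical limits are $1$ and $0$.
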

\begin{proof}
If possible, let $\mathcal{I}^{st}-\displaystyle \lim_n x_n=x$ and $\mathcal{I}^{st}-\displaystyle \lim_n x_n=y$ such that $x\neq y$. Let $\varepsilon=\frac{|x-y|}{3}>0$. Then there exist $M_1$, $M_2\in \mathcal {F(I)}$ such that $\mathcal{I}^{st}-\displaystyle \lim_{n\in M_1} x_n=x$ and $\mathcal{I}^{st}-\displaystyle \lim_{n\in M_2} x_n=y$.\\  Then, $M=M_1 \cap M_2(\neq \phi)\in \mathcal{F(I)}$. Also, $d(\{n\in M_1: |x_n-x|\geq \varepsilon\})=0$ and $d(\{n\in M_2: |x_n-y|\geq \varepsilon\})=0$.  Consider $P=\{n\in M: |x_n-x|< \varepsilon\}$ and $Q=\{n\in M: |x_n-y|< \varepsilon\}$. From statistical convergence on $M_1$ and $M_2$, we get $d(P)=1$ and $d(Q)=1$ (since $M\subseteq M_1, M_2$). But $P\cap Q=\phi$. Therefore, $d(P\cup Q)=d(P)+d(Q)=2$, a contradiction. Hence $x=y$. This completes the proof.
\end{proof}
\begin{theorem}
For any sequence $\{x_n\}$ of real numbers, $\mathcal{I^*}-\displaystyle \lim_n x_n=x$ implies $\mathcal{I}^{st}-\displaystyle \lim_n x_n=x$.
\end{theorem}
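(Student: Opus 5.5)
The plan is to use the same set $M$ for both notions and then invoke the elementary fact, recalled in the introduction, that ordinary convergence implies statistical convergence. Suppose $\mathcal{I^*}-\displaystyle \lim_n x_n=x$. By definition there is a set $M=\{m_1<m_2<\cdots<m_k<\cdots\}\in\mathcal{F(I)}$ with $\displaystyle\lim_{k\to\infty}x_{m_k}=x$ in the ordinary sense. Since $\mathcal{I}$ is admissible and non-trivial, every member of $\mathcal{F(I)}$ is infinite, so $\{x_{m_k}\}_{k\in\mathbb{N}}$ is a genuine sequence indexed by $k$.

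Next I will check directly that $st-\displaystyle\lim_{k\to\infty}x_{m_k}=x$. Fix $\varepsilon>0$. Ordinary convergence gives $k_0$ such that $|x_{m_k}-x|<\varepsilon$ for all $k\ge k_0$. Hence
\[
A(\varepsilon)=\{k\in\mathbb{N}:|x_{m_k}-x|\ge\varepsilon\}\subseteq\{1,2,\dots,k_0-1\},
\]
so $A(\varepsilon)$ is finite and $\frac{|A(\varepsilon)_n|}{n}\le\frac{k_0}{n}\to0$. Thus $d(A(\varepsilon))=0$ for every $\varepsilon>0$, which is statistical convergence of $\{x_{m_k}\}$ to $x$.

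The same $M\in\mathcal{F(I)}$ therefore witnesses $\mathcal{I}^{st}-\displaystyle\lim_n x_n=x$, as required. There is no real obstacle. The only point needing care is that the density is taken with respect to the index $k$ of the subsequence, not the original index $n$. This causes no difficulty here, because the exceptional set is finite in either indexing.
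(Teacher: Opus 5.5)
Your proposal is correct and follows the paper's proof: the same witnessing set $M\in\mathcal{F(I)}$ is used, and the implication reduces to the fact that ordinary convergence of $\{x_{m_k}\}$ implies its statistical convergence. The only difference is that you verify this fact directly, by noting the exceptional index set is finite and hence has density zero, whereas the paper simply cites it.
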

\begin{proof} 
 Since every ordinary convergent sequence is statistically convergent, by definition, $\displaystyle \lim_{k\to\infty}x_{m_k}=x$ implies $ st-\displaystyle \lim_{k\to\infty}x_{m_k}=x$.
 Hence $\mathcal{I}^{st}-\displaystyle \lim_n x_n=x$.
\end{proof}

\begin{remark}
    The converse of the above theorem may not be true, which is shown in the following example.
\end{remark}

\begin{example}
Let $\mathbb{N}={\bigcup_{j=1}^\infty}D_j$ be a decomposition of $\mathbb{N}$ (i.e., $D_i \cap D_j=\phi$ for $i\neq j$), where $D_j(j=1,2,...)$ are infinite sets and $D_j=\{2^{j-1}(2s-1):s=1,2,...\}$. Denote by $\mathcal{J}$ the class of all $A\subseteq \mathbb{N}$ such that $A$ intersects only a finite number of $D_j$. Then $\mathcal{J}$ is an admissible ideal in $\mathbb{N}$.\\
Let $k_0$ be a fixed positive integer and $M=\{n\in \mathbb{N}: n\in\ D_j ,n\neq k^2 , k>k_0\}$ , $H=\{n\in\mathbb{N}: n\in\  D_j,n\neq k^2 , k\leq k_0\}$, $F=\{ n\in \mathbb{N}:n\in\  D_j ,n=k^2\}$.  Define a sequence $\{x_n\}_{n\in\mathbb{N}}$ as follows:
\begin{equation*}
    x_n=
    \begin{cases}
        \frac{1}{j}, &  n\in\ M \\
        2j+1,  &   n\in\ H  \\
        2j,  &  n\in\ F  
    \end{cases}
\end{equation*}
Clearly $M\cup F \in \mathcal{F(\mathcal{J})}$ and $\{x_n\}_{n\in M\cup F}$ is a subsequence of $\{x_n\}_{n\in\mathbb{N}}$ which is statistically convergent to 0. But $\mathcal{I}^*-\displaystyle \lim_n x_n=0$ does not hold.
\end{example}

\begin{definition}
    Let $X\neq \phi$. A non-trivial ideal $\mathcal{I}$ in $X$ is called d-admissible ideal if $\mathcal{I}$ contains every subset of $X$ whose density is zero.
\end{definition}

\begin{theorem}
    Let $\mathcal{I}$ be a d-admissible ideal. Then $\mathcal{I}^{st}-\displaystyle \lim_n x_n=x$ implies $\mathcal{I}-\displaystyle \lim_n x_n=x$.
\end{theorem}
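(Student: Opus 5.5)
The plan is to reduce the set $A(\varepsilon)=\{n\in\mathbb N:|x_n-x|\geq\varepsilon\}$ to a union of two sets, each of which lies in $\mathcal I$. Suppose $\mathcal{I}^{st}-\displaystyle\lim_n x_n=x$. Then there is $M=\{m_1<m_2<\cdots\}\in\mathcal{F(I)}$ with $st-\displaystyle\lim_{k\to\infty}x_{m_k}=x$. Fix $\varepsilon>0$ and put
$$K(\varepsilon)=\{k\in\mathbb N:|x_{m_k}-x|\geq\varepsilon\},$$
so that $d(K(\varepsilon))=0$ by the definition of statistical convergence. Then
$$A(\varepsilon)\subseteq(\mathbb N\setminus M)\cup\{m_k:k\in K(\varepsilon)\}.$$
The first set is in $\mathcal I$ because $M\in\mathcal{F(I)}$.

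The key step is to show that $B=\{m_k:k\in K(\varepsilon)\}$ has natural density zero as a subset of $\mathbb N$. Since $m_1<m_2<\cdots$ are natural numbers, we have $m_k\geq k$ for every $k$. Hence $m_k\leq n$ forces $k\leq n$, which gives
$$|B_n|=|\{k\in K(\varepsilon):m_k\leq n\}|\leq|\{k\in K(\varepsilon):k\leq n\}|.$$
Dividing by $n$ and letting $n\to\infty$ yields $d(B)=0$. Because $\mathcal I$ is d-admissible, $B\in\mathcal I$. Condition (i) of an ideal then gives $(\mathbb N\setminus M)\cup B\in\mathcal I$, and condition (ii) gives $A(\varepsilon)\in\mathcal I$. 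As $\varepsilon>0$ was arbitrary, $\mathcal{I}-\displaystyle\lim_n x_n=x$.

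The only delicate point is the step above: statistical convergence of the subsequence is measured by the density of the index set $K(\varepsilon)$, while d-admissibility requires density zero of the set $B$ of actual positions in $\mathbb N$. The inequality $m_k\geq k$ transfers the first to the second. If one instead reads the definition as in the proof of Theorem 3.1, namely that $\{n\in M:|x_n-x|\geq\varepsilon\}$ itself has density zero in $\mathbb N$, this step is immediate. Either reading therefore gives the result.
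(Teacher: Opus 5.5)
Your proof is correct and takes the same route as the paper. You cover $A(\varepsilon)$ by $(\mathbb{N}\setminus M)\cup\{m_k:k\in K(\varepsilon)\}$ and conclude using $M\in\mathcal{F(I)}$ together with d-admissibility; unlike the paper, which simply asserts $d(B)=0$, you justify that step with the bound $m_k\geq k$.
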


\begin{proof}
 Let $\varepsilon >0$ and $\{x_n\}$ be a sequence such that $\mathcal{I}^{st}-\displaystyle \lim_n x_n=x$. Then there exists $M=\{n_1<n_2<...\}\in \mathcal {F(I)}$ such that $st-\displaystyle \lim_{k\to\infty}x_{n_k}=x$.\\
 Therefore, $d(\{k\in\mathbb{N}: |x_{n_k}-x|\geq \varepsilon\})=0$ $\Rightarrow d(A)=0$,where $A=\{k\in\mathbb{N}: |x_{n_k}-x|\geq\varepsilon\}$.\\
 Let $B=\{n_k:k\in A\}\subset M$.
 Now $A(\varepsilon)=\{n\in \mathbb{N}; |x_n-x|\geq \varepsilon\} \subseteq (\mathbb{N}\backslash M) \cup B$.\\ Therefore, $A(\varepsilon)\in \mathcal{I}$, as $\mathbb{N}\backslash M \in \mathcal{I}$ and $d(B)=0$. Hence, $\mathcal{I}-\displaystyle \lim_n x_n=x$ 
\end{proof}

\begin{remark}
    The converse of the above theorem may not be true. This is shown by the following example.
\end{remark}
\begin{example}
Take $\mathcal{I}=\mathcal{J}$ ( $\mathcal{J}$ is mentioned in Example 3.1 ). Consider a sequence $\{x_n\}$  defined by $x_n=\frac{1}{j}$ for $n\in D_j(j=1,2,...)$. Clearly $\mathcal{I}-\displaystyle \lim_n x_n=0$.
If possible, let $\mathcal{I}^{st}-\displaystyle \lim_n x_n=0$. Then there exists  $M=\{n_1<n_2<...\}\in \mathcal {F(I)}$ such that $st-\displaystyle \lim_{k\to\infty}x_{n_k}=0$. So for every $\varepsilon>0$, $d(A(\varepsilon))=0$ where
\begin{equation}
   A(\varepsilon)=\{k\in \mathbb{N}:|x_n-0|\geq\varepsilon\} 
\end{equation}
So $H=\mathbb{N}\backslash M\in \mathcal{I}$. So there exists $p\in \mathbb{N}$ such that $H\subset D_1\cup D_2 \cup ...\cup D_p$ and $D_{p+1}\subset \mathbb{N}\backslash H=M$. So there are infinite many $k's$ such that $x_{n_k}=\frac{1}{p+1}$.\\
 Let $\varepsilon=\frac{1}{p+2}$. Then $B(\varepsilon)=\{n_k\in M: |x_{n_k}-0|>\frac{1}{p+2}\}=D_{p+1}$.\\ But $d(D_{p+1})\neq0$. This contradicts (3.1). So $\mathcal{I}^{st}-\displaystyle \lim_n x_n=0$ is not true.
\end{example}

\section{\bf{Convergence fields of $\mathcal{I}^*$-convergence and $\mathcal{I}^{st}$-convergence}}
Let $l_\infty$ be the set of all real bounded sequences. Then $l_\infty$ becomes a normed linear space with respect to the sup-norm defined by $\|x\|=\displaystyle\sup_n|x_n|$, $x=\{x_n\}\in l_\infty$.
\begin{definition}\cite{PK1}
The convergence fields of $\mathcal{I}$-convergence and $\mathcal{I}^*$-convergence are denoted, respectively, by $F(\mathcal{I})$, $F(\mathcal{I}^{*})$ and are defined as follows: \\
$F(\mathcal{I})=\{x=\{x_n\}\in l_\infty : \mathcal{I}-\displaystyle \lim_n x_n\in \mathbb{R} \ \ exists\}$ \\$F(\mathcal{I}^{*})=\{x=\{x_n\}\in l_\infty : \mathcal{I}^{*}-\displaystyle \lim_n x_n \in \mathbb{R} \ \ exists\}$.
\end{definition}
\begin{definition}
The convergence field of $\mathcal{I}^{st}$ can be denoted by $F(\mathcal{I}^{st})$ and is defined by $F(\mathcal{I}^{st})=\{x=\{x_n\} \in l_\infty : \mathcal{I}^{st}-\displaystyle \lim_n x_n \in \mathbb{R}\ \ exists \}$.
\end{definition}
Then clearly $F(\mathcal{I})$ , $F(\mathcal{I}^*)$ and $F(\mathcal{I}^{st})$ are subsets of the normed linear space $l_\infty$.

From Theorem 3.1 and Theorem 3.2 we can summarize our results regarding the convergence fields $F(\mathcal{I})$, $F(\mathcal{I}^*)$ and $F(\mathcal{I}^{st})$. From Theorem 3.1, we know that $F(\mathcal{I}^{*})\subseteq F(\mathcal{I}^{st})$ and from Theorem 3.2, we get $F(\mathcal{I}^{st}) \subseteq F(\mathcal{I})$. Thus $F(\mathcal{I}^*)\subseteq F(\mathcal{I}^{st})\subseteq F(\mathcal{I})$.

\begin{definition}\cite{AM1}
Let $\mathcal{I}$and $\mathcal{K}$ be two non-trivial ideals on a non empty subset $S$ of $\mathbb{N}$, the set of natural numbers. For the ideals $\mathcal{I}$and $\mathcal{K}$ on $S$,
\begin{center}
    $\mathcal{I} \vee \mathcal{K}=\{A\cup B:A\in \mathcal{I},B\in \mathcal{K}\}$
\end{center} is an ideal which is the smallest ideal containing both $\mathcal{I}$and $\mathcal{K}$ on $S$, i.e., $\mathcal{I},\mathcal{K}\subseteq \mathcal{I} \vee \mathcal{K}$. 
\end{definition}

\begin{theorem}
If $\mathcal{I}$ is an admissible ideal then $F(\mathcal{I}^{st})$ is a linear subspace of $l_\infty$.
\end{theorem}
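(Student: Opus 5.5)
The plan is to verify directly the two closure properties: if $x=\{x_n\},y=\{y_n\}\in F(\mathcal{I}^{st})$ and $\alpha\in\mathbb{R}$, then $x+y\in F(\mathcal{I}^{st})$ and $\alpha x\in F(\mathcal{I}^{st})$. Boundedness is immediate, since $l_\infty$ is a linear space. So everything reduces to producing a single set in $\mathcal{F(I)}$ along which the combined sequence is statistically convergent. Let $\mathcal{I}^{st}-\lim_n x_n=\xi$ and $\mathcal{I}^{st}-\lim_n y_n=\eta$, witnessed by $M_1,M_2\in\mathcal{F(I)}$.

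Scalar multiplication is the easy case. I would keep the same set $M_1$. If $\alpha=0$ there is nothing to prove. Otherwise, for $\varepsilon>0$,
\[
\{n\in M_1:|\alpha x_n-\alpha\xi|\ge\varepsilon\}=\{n\in M_1:|x_n-\xi|\ge\varepsilon/|\alpha|\},
\]
and this set has density zero. Hence $\mathcal{I}^{st}-\lim_n \alpha x_n=\alpha\xi$.

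For the sum, I would take $M=M_1\cap M_2$, which lies in $\mathcal{F(I)}$ because $\mathcal{F(I)}$ is a filter. It is infinite because $\mathcal{I}$ is admissible and non-trivial. The triangle inequality gives
\[
\{n\in M:|x_n+y_n-(\xi+\eta)|\ge\varepsilon\}\subseteq\{n\in M_1:|x_n-\xi|\ge\varepsilon/2\}\cup\{n\in M_2:|y_n-\eta|\ge\varepsilon/2\}.
\]
Each set on the right has density zero. A finite union of density-zero sets has density zero, and so does any subset of such a union. This gives $\mathcal{I}^{st}-\lim_n(x_n+y_n)=\xi+\eta$. Here I use the same convention for density as in the proof of Theorem 3.1, where the exceptional sets $\{n\in M_i:|x_n-x|\ge\varepsilon\}$ are measured by natural density in $\mathbb{N}$.

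The main obstacle is precisely this convention. Suppose instead that statistical convergence of $\{x_{m_k}\}$ is read in the index $k$, i.e.\ as density relative to the enumeration of $M_1$. Then passing from $M_1$ to the smaller set $M=M_1\cap M_2$ is not automatic. A set of relative density zero in $M_1$ can have positive relative density inside a sparse subset of $M_1$. In that case I would first try to show that the bad set $B_1\cap M$ is still small relative to $M$, using $B_1\subseteq M_1$ with $d_{M_1}(B_1)=0$ together with a lower bound on the counting function of $M$ inside $M_1$. If that fails, I would fall back on the ideal $\mathcal{I}\vee\mathcal{I}_d$ of Definition 4.3 and modify $M$ by removing the bad indices, which form a set in $\mathcal{I}_d$. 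In the write-up I would state explicitly that densities are taken in $\mathbb{N}$, consistent with Theorem 3.1, so that the intersection argument above goes through cleanly.
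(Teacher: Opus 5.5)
This is essentially the paper's proof. The paper also takes $M=M_1\cap M_2\in\mathcal{F(I)}$, passes the density-zero exceptional sets from $M_i$ down to $M$ by monotonicity of natural density in $\mathbb{N}$, and then cites linearity of statistical convergence; you make that last step explicit via the scalar case and the triangle-inequality inclusion.

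Your caveat is well founded, since the paper's subset step silently relies on the same $\mathbb{N}$-density convention you adopt. Under the literal index reading of Definition 3.1 neither fallback can succeed, because the statement itself then fails. Take $\mathcal{I}=\{X\subseteq\mathbb{N}: X\cap C \text{ finite}\}$ with $C$ infinite of density zero: a sequence vanishing off $C$ is in $F(\mathcal{I}^{st})$ via $M=\mathbb{N}$, and one vanishing on $C$ is in it via $M=C$. With suitable choices on the remaining indices, their sum is not in $F(\mathcal{I}^{st})$.
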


\begin{proof}
Let $\{x_n\},\{y_n\} \in F(\mathcal{I}^{st})$. Then $\mathcal{I}^{st}-\displaystyle \lim_n x_n=x_0$ and $\mathcal{I}^{st}-\displaystyle \lim_n y_n=y_0$ for some $x_0,y_0 \in \mathbb{R}$.\\ So there exist $M_1,M_2 \in F(\mathcal{I)}$ such that $\{x_n\}_{n\in M_1}$ is statistically convergent to $x_0$ and $\{y_n\}_{n\in M_2}$ is statistically convergent to $y_0$. Let $M=M_1\cap M_2\in F(\mathcal{I)}$. Clearly $M$ is infinite, since $\mathcal{I}$ is non-trivial. Also $M\subseteq M_1,M_2$. Now we claim that $\{x_n\}_{n\in M}$ is statistically convergent to $x_0$.\\
Let $\varepsilon>0$. Since $\{x_n\}_{n\in M_1}$ is statistically convergent to $x_0$, so $d(\{k\in M_1 : |x_k-x_0|\geq \varepsilon\})=0$.\\ Now $\{k\in M : |x_k-x_0|\geq \varepsilon\}\subseteq \{k\in M_1 : |x_k-x_0|\geq \varepsilon\}$. Therefore, $d(\{k\in M : |x_k-x_0|\geq \varepsilon\})=0$. Hence $\{x_n\}_{n\in M}$ is statistically convergent to $x_0$. Similarly, $\{y_n\}_{n\in M}$ is statistically convergent to $y_0$. Statistical convergence is linear, so $\{ax_n+by_n\}_{n\in M}$ is statistically convergent to $ax_0+by_0$. Now we have found a $M$ in $F(\mathcal{I)}$ such that $\{ax_n+by_n\}_{n\in M}$ is statistically convergent to $ax_0+by_0$. Therefore, $\{ax_n+by_n\}$ is $\mathcal{I}^{st}$-convergent and $\mathcal{I}^{st}-\displaystyle \lim_n\{ax_n+by_n\}=ax_0+by_0$. Hence $F(\mathcal{I}^{st})$ is a linear subspace of $l_\infty$.
\end{proof}
\begin{theorem}
    Suppose $\mathcal{I}$ is an admissible ideal in $\mathbb{N}$ and $\mathcal{I}_d=\{A\subseteq \mathbb{N} : d(A)=0\}$. Then $F(\mathcal{I} \ \vee \mathcal{I}_d)$ is a closed subset of $l_\infty$.
\end{theorem}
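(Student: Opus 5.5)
We must show that $F(\mathcal{J})$ is closed in $l_\infty$ for the ideal $\mathcal{J}=\mathcal{I}\vee\mathcal{I}_d$; the proof works for any non-trivial ideal $\mathcal{J}$. First we check that $\mathcal{J}$ is a non-trivial admissible ideal. By Definition 4.3 it is an ideal, and it contains all finite sets. If $\mathcal{J}$ were trivial, we would have $\mathbb{N}=A\cup B$ with $A\in\mathcal{I}$ and $d(B)=0$. This cannot be ruled out in general, so we assume, as is implicit in the statement, that $\mathcal{J}$ is non-trivial, i.e.\ $\mathcal{F}(\mathcal{J})$ contains no empty set. The point of using $\mathcal{J}$ is that, by Theorem 3.3 (and its proof), $F(\mathcal{I}^{st})\subseteq F(\mathcal{J})$. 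We then prove closedness of $F(\mathcal{J})$ by the standard Cauchy-limit argument.

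Take $x^{(m)}=\{x^{(m)}_n\}\in F(\mathcal{J})$ with $x^{(m)}\to x$ in sup-norm, and let $\mathcal{J}-\lim_n x^{(m)}_n=\ell_m$.

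\textbf{Step 1: $\{\ell_m\}$ is Cauchy.} Given $\varepsilon>0$, choose $N$ with $\|x^{(m)}-x^{(p)}\|<\varepsilon/3$ for $m,p\ge N$. The sets $\{n:|x^{(m)}_n-\ell_m|\ge\varepsilon/3\}$ and $\{n:|x^{(p)}_n-\ell_p|\ge\varepsilon/3\}$ belong to $\mathcal{J}$. Their union is in $\mathcal{J}$ and is not all of $\mathbb{N}$ by non-triviality, so some $n$ lies outside both. For this $n$,
\[
|\ell_m-\ell_p|\le |\ell_m-x^{(m)}_n|+|x^{(m)}_n-x^{(p)}_n|+|x^{(p)}_n-\ell_p|<\varepsilon .
\]
Hence $\ell_m\to\ell$ for some $\ell\in\mathbb{R}$ by completeness.

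\textbf{Step 2: $\mathcal{J}-\lim_n x_n=\ell$.} Fix $\varepsilon>0$ and choose $m$ with $\|x^{(m)}-x\|<\varepsilon/3$ and $|\ell_m-\ell|<\varepsilon/3$. Then
\[
\{n:|x_n-\ell|\ge\varepsilon\}\subseteq\{n:|x^{(m)}_n-\ell_m|\ge\varepsilon/3\}\in\mathcal{J},
\]
so the left-hand set lies in $\mathcal{J}$ by heredity. Also $x\in l_\infty$, since $l_\infty$ is complete. Therefore $x\in F(\mathcal{J})$, and $F(\mathcal{J})$ is closed.

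The only delicate point is the non-emptiness used in Step 1, which requires $\mathcal{J}$ to be non-trivial. This is the main obstacle. If the authors intend the weaker reading in which $\mathcal{I}$ is merely admissible, we would add the hypothesis that $\mathcal{I}\vee\mathcal{I}_d$ is non-trivial; otherwise $F(\mathcal{J})$ is not even well defined. Alternatively, if $\mathcal{J}=2^{\mathbb{N}}$ then every real is a limit, so every bounded sequence belongs to $F(\mathcal{J})=l_\infty$, which is trivially closed.
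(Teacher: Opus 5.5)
Your proof is correct and follows essentially the same route as the paper. You show the limits $\ell_m$ are Cauchy by picking an index lying outside two sets of $\mathcal{I}\vee\mathcal{I}_d$, and then use the inclusion $\{n:|x_n-\ell|\ge\varepsilon\}\subseteq\{n:|x^{(m)}_n-\ell_m|\ge\varepsilon/3\}$; your explicit handling of the case $\mathcal{I}\vee\mathcal{I}_d=2^{\mathbb{N}}$ (which does occur, e.g.\ when $\mathcal{I}$ contains a set of density one) covers a point the paper silently assumes when it asserts $U(\frac{\varepsilon}{3})\cap V(\frac{\varepsilon}{3})\neq\phi$.
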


\begin{proof}
Let $y^{(m)}=\{\{{y_j}^{(m)}\}_{j=1}^\infty\}\subset F(\mathcal{I} \ \vee \mathcal{I}_d) \  (m=1,2,...)$ such that $lim_{m\to \infty}y^{(m)}=y$ where $y=\{y_j\}_{j=1}^\infty \in l_\infty$. So, $\displaystyle \lim_{m\to\infty}\|y^{(m)}-y\|=0$.\\ Now we show that $y\in F(\mathcal{I} \ \vee \mathcal{I}_d)$. Based on the assumption that 
\begin{equation}
    (\mathcal{I} \ \vee \mathcal{I}_d)-\displaystyle \lim_j\ y_j^{(m)}=\xi_m\in \mathbb{R} \ \textit{exists for each } m=1,2,....
\end{equation}
We first show that $\{\xi_m\}$ is a Cauchy sequence and then secondly we show that $(\mathcal{I} \ \vee \mathcal{I}_d)-\displaystyle \lim_j y=\xi$.

 So, let $\varepsilon>0$. Since $\displaystyle \lim_{m\to \infty}y^{(m)}=y$, we say that $\{y^{(m)}\}_{m=1}^\infty$ is a Cauchy sequence in $l_\infty$. So, there exists an element $n_0\in \mathbb{N}$ such that for each $u,v>n_0$ we get
\begin{equation}
    \|y^{(u)}-y^{(v)}\|<\frac{\varepsilon}{3}
\end{equation}
Take two fixed natural numbers $u,v$ so that $u,v>n_0$. Then, by (4.1), the sets $U(\frac{\varepsilon}{3})=\{j:|y_j^{(u)}-\xi_u|<\frac{\varepsilon}{3}\}$, $V(\frac{\varepsilon}{3})=\{j:|y_j^{(v)}-\xi_v|<\frac{\varepsilon}{3}\} \in F(\mathcal{I} \ \vee \mathcal{I}_d)$. So, $ U(\frac{\varepsilon}{3}) \cap V(\frac{\varepsilon}{3})\neq \phi$. Let $t\in U(\frac{\varepsilon}{3}) \cap V(\frac{\varepsilon}{3})$. Then, 
\begin{equation}
    |y_t^{(u)}-\xi_u|<\frac{\varepsilon}{3}, |y_t^{(v)}-\xi_v|<\frac{\varepsilon}{3}
\end{equation}
Therefore,
    $|\xi_u-\xi_v|\leq |\xi_u-y_t^{(u)}|+|y_t^{(u)}-y_t^{(v)}|+|y_t^{(v)}-\xi_v|<\frac{\varepsilon}{3}+\frac{\varepsilon}{3}+\frac{\varepsilon}{3}=\varepsilon$, by (4.2) and (4.3).
Therefore, $\{\xi_m\}_1^\infty$ is a Cauchy sequence and so $\displaystyle \lim_{m\to \infty}\xi_m$ exists and is equal to $\xi$ (say)$ \in \mathbb{R}$.\\
Next, let $\eta>0$. Choose a natural number $\lambda_0$ such that
 \begin{equation}
     |\xi_\lambda-\xi|<\frac{\eta}{3}
 \end{equation} and
 \begin{equation}
     \|y^{(\lambda)}-y\|\leq\frac{\eta}{3}
 \end{equation}
hold together for $\lambda>\lambda_0$.
 So,\begin{equation}
     |y_n^{(\lambda)}-y_n|\leq\frac{\eta}{3} \ \text{for each } n.
 \end{equation}
 Now, for each $n\in\mathbb{N}$, we have 
 \begin{equation}
     |y_n-\xi|\leq|y_n-y_n^{(\lambda)}|+|y_n^{(\lambda)}-\xi_\lambda|+|\xi_\lambda-\xi|.
 \end{equation}
 Let $A(\eta)=\{n:|y_n-\xi|\geq\eta\}$ and $A_\lambda(\frac{\eta}{3})=\{n: |y_n^{(\lambda)}-\xi_\lambda|\geq\frac{\eta}{3}\}$.\\ So, if $n\in A^c_\lambda(\frac{\eta}{3})$, then by (4.4), (4.6) and (4.7), we get $|y_n-\xi|<\eta$ for every $n\in A^c_\lambda(\frac{\eta}{3})$. So we have, \begin{equation}
     A^c_\lambda(\frac{\eta}{3})\subseteq A^c(\eta).
 \end{equation}
 Since $A_\lambda(\frac{\eta}{3})\in \mathcal{I} \vee\mathcal{I}_d$, in view of (4.8), we find that $A(\eta)\in \mathcal{I} \vee\mathcal{I}_d$. This completes the proof.
\end{proof}

\begin{theorem}
For every admissible ideal $\mathcal{I}$ in $\mathbb{N}$ we have $\overline{F(\mathcal{I}^{st})}=F( \mathcal{I} \vee\mathcal{I}_d)$.
\end{theorem}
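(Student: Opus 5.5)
We prove the two inclusions separately.

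\emph{First inclusion: $\overline{F(\mathcal{I}^{st})}\subseteq F(\mathcal{I}\vee\mathcal{I}_d)$.} By Theorem 4.2 the set $F(\mathcal{I}\vee\mathcal{I}_d)$ is closed in $l_\infty$, so it suffices to show $F(\mathcal{I}^{st})\subseteq F(\mathcal{I}\vee\mathcal{I}_d)$. This is the argument of Theorem 3.2, but with $\mathcal{I}\vee\mathcal{I}_d$ in place of a d-admissible $\mathcal{I}$; note that $\mathcal{I}\vee\mathcal{I}_d$ contains every density-zero set by construction. Let $M=\{m_1<m_2<\cdots\}\in\mathcal{F(I)}$ with $st-\lim_k x_{m_k}=x$, fix $\varepsilon>0$, and put $A=\{k:|x_{m_k}-x|\ge\varepsilon\}$, so $d(A)=0$. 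Set $B=\{m_k:k\in A\}$. Since $m_k\ge k$, the counting function of $B$ satisfies $|B\cap[1,n]|\le|A\cap[1,n]|$, hence $d(B)=0$. Then
\[
\{n:|x_n-x|\ge\varepsilon\}\subseteq(\mathbb{N}\setminus M)\cup B\in\mathcal{I}\vee\mathcal{I}_d .
\]
We should also note that $\mathcal{I}\vee\mathcal{I}_d$ must be non-trivial for $F(\mathcal{I}\vee\mathcal{I}_d)$ to make sense. This is the same implicit assumption the paper makes in Theorem 4.2, and we adopt it.

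\emph{Second inclusion: $F(\mathcal{I}\vee\mathcal{I}_d)\subseteq\overline{F(\mathcal{I}^{st})}$.} Let $x\in l_\infty$ with $(\mathcal{I}\vee\mathcal{I}_d)-\lim_n x_n=\xi$, and fix $\varepsilon>0$. The set $A=\{n:|x_n-\xi|\ge\varepsilon\}$ lies in $\mathcal{I}\vee\mathcal{I}_d$, so $A=C\cup D$ with $C\in\mathcal{I}$ and $d(D)=0$. Define $y$ by $y_n=\xi$ for $n\in A$ and $y_n=x_n$ otherwise. Then $y\in l_\infty$ and $\|x-y\|\le\sup_{n\in A}|x_n-\xi|$, which is not yet small. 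It is better to define $y$ on the complement instead: set $y_n=\xi$ for $n\notin A$ and $y_n=x_n$ for $n\in A$. Then $\|x-y\|\le\varepsilon$, since the two sequences differ only off $A$, where $|x_n-\xi|<\varepsilon$.

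It remains to show $y\in F(\mathcal{I}^{st})$. Take $M=\mathbb{N}\setminus C\in\mathcal{F(I)}$, written $M=\{m_1<m_2<\cdots\}$. On $M$ we have $y_n\ne\xi$ only for $n\in D\cap M$. We therefore need the set of indices $\{k:m_k\in D\}$ to have density zero, and this is the main obstacle: density along a subsequence is not automatically inherited. If $M$ has positive lower density, a bound of the form $|\{k\le K:m_k\in D\}|\le|D\cap[1,m_K]|$ together with $m_K\le K/c$ gives density zero. But a general $M\in\mathcal{F(I)}$ may have density zero.

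To handle this, first enlarge $C$ so as to force structure on $M$. I would replace $C$ by $C'=C\setminus D$. Then $A=C'\cup D$ still holds and $C'$ is disjoint from $D$. Now $D\subseteq M':=\mathbb{N}\setminus C'$, and the relevant index set is $\{k:m'_k\in D\}$. Its density need not vanish in general, so a different fix is needed. The one I would try is to alter the approximant further: put $y_n=\xi$ also for $n\in D\setminus C$. This costs $\sup_{D}|x_n-\xi|$, so it does not preserve $\|x-y\|\le\varepsilon$.

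The cleanest way out is to choose the decomposition so that the trouble disappears, namely to check whether the $\mathcal{I}_d$-part can be absorbed. If $D\setminus C$ is finite, we are done. Otherwise I would try to show that $\{k:m_k\in D\}$ has density zero whenever $D\cap M$ has density zero relative to $M$, and to arrange this relative-density condition by passing to a suitable $M\in\mathcal{F(I)}$. Of the whole argument, this is the step I expect to require the most care.
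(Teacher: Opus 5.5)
\emph{Verdict.} There is a genuine gap: the second inclusion is never proved, and it cannot be proved as stated.

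\emph{First inclusion.} This part is correct and follows the paper's route. You run the d-admissible argument of Section 3 with $\mathcal{I}\vee\mathcal{I}_d$, which contains every density-zero set, and then use closedness of $F(\mathcal{I}\vee\mathcal{I}_d)$. Your bound $|B\cap[1,n]|\le|A\cap[1,n]|$ from $m_k\ge k$ supplies a step the paper omits.

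\emph{Second inclusion.} None of the repairs you list is carried through, so you never show the approximant lies in $F(\mathcal{I}^{st})$. The paper crosses the same step by asserting that $\{n\in\mathbb{N}\setminus G:|x_n-L|\ge\varepsilon\}\subseteq H$ with $d(H)=0$ gives $st$-$\lim$ of $\{x_n\}_{n\in\mathbb{N}\setminus G}$ equal to $L$. That measures the exceptional set by density in $\mathbb{N}$, not along the enumeration $k\mapsto m_k$ that the definition of $\mathcal{I}^{st}$-convergence requires. This is exactly the inheritance you doubted, and your doubt is justified. The inference is valid when $G$ can be enlarged inside $\mathcal{I}$ to a set whose complement has positive lower density; that is your estimate $m_K\le K/c$, and it covers $\mathcal{I}_f$, $\mathcal{I}_d$ and the ideal $\mathcal{J}$ of Example 3.1. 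In general, however, no choice of decomposition, approximant, or $M\in\mathcal{F(I)}$ can work. The inclusion $F(\mathcal{I}\vee\mathcal{I}_d)\subseteq\overline{F(\mathcal{I}^{st})}$ is false for some admissible $\mathcal{I}$ with $\mathcal{I}\vee\mathcal{I}_d$ non-trivial.

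\emph{Counterexample.} Define the following objects.
\begin{itemize}
\item Let $a_j=2^{4^j}$ and $b_j=\sqrt{a_j}$.
\item Let $M_0=\bigcup_j[a_j,2a_j)$ and $H=\bigcup_j[a_j,a_j+b_j)\subseteq M_0$.
\item Let $\mathcal{I}=\{A\subseteq\mathbb{N}: A\cap M_0 \text{ finite}\}$.
\end{itemize}
Then $\mathcal{I}$ is admissible and $\mathcal{F(I)}=\{M: M_0\setminus M \text{ finite}\}$. Also $\mathcal{I}\vee\mathcal{I}_d=\{B: d(B\cap M_0)=0\}$, which is non-trivial because $|M_0\cap[1,2a_j)|\ge a_j$. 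Moreover $d(H)=0$: for $a_j\le N<a_{j+1}$ we have $|H\cap[1,N]|\le 2b_j$ while $N\ge b_j^2$.

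Put $y_n=0$ for $n\in M_0\setminus H$ and $y_n=1$ otherwise. Every exceptional set of $y$ meets $M_0$ only inside $H$, so $(\mathcal{I}\vee\mathcal{I}_d)$-$\lim y=0$.

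Now suppose $\|x-y\|<\frac14$ and $M=\{m_1<m_2<\cdots\}\in\mathcal{F(I)}$ with $st$-$\lim_k x_{m_k}=L'$.
\begin{itemize}
\item If $L'\ge\frac12$, then $|x_n-L'|>\frac14$ on $M\cap[a_j+b_j,2a_j)$. This set contains at least $a_j-b_j-|M_0\setminus M|$ of the fewer than $2a_j$ elements of $M\cap[1,2a_j)$. So the exceptional index set has upper density at least $\frac12$, which is impossible.
\item Hence $L'<\frac12$. On $S=(M\setminus M_0)\cup(M\cap H)$ we have $x>\frac34$, so $|x_n-L'|>\frac14$, and $S$ must have relative density zero in $M$. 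Since $M\subseteq M_0\cup S$, this gives $|M\cap[1,N]|\le 2|M_0\cap[1,N]|$ for large $N$. Therefore $|M\cap H\cap[1,N]|=o(|M_0\cap[1,N]|)$.
\item But at $N=a_j+b_j-1$ the left side is at least $b_j-|M_0\setminus M|$. Meanwhile $|M_0\cap[1,N]|\le 2a_{j-1}+b_j=b_j(1+o(1))$, since $a_{j-1}/b_j=2^{-4^{j-1}}$. This is a contradiction.
\end{itemize}
Thus $B(y,\frac14)\cap F(\mathcal{I}^{st})=\emptyset$. The statement needs an extra hypothesis of the kind above. Alternatively, it holds under the weaker density-in-$\mathbb{N}$ reading of statistical convergence on $M$, which the paper's proof tacitly uses.
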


\begin{proof}
By Theorem 3.2 we have $F(\mathcal{I}^{st})\subseteq F(\mathcal{I} \vee\mathcal{I}_d)$. Since $F(\mathcal{I} \vee\mathcal{I}_d)$ is closed in $l_\infty$, we get $\overline{F(\mathcal{I}^{st})}\subseteq F(\mathcal{I} \vee\mathcal{I}_d)$.Therefore, this is enough to prove that $F(\mathcal{I} \vee\mathcal{I}_d)\subseteq \overline{F(\mathcal{I}^{st})}$.\\
For $z\in l_\infty $ and $\delta>0$, we denote the ball whose center at $z$ and radius $\delta$ by \begin{equation*}
    B(z,\delta)=\{x\in l_\infty : \|x-z\|<\delta\}.
\end{equation*}
Let $y=\{y_n\}\in F(\mathcal{I} \vee\mathcal{I}_d)$ be an arbitrary element and $0<\delta<1$. It is required to show that 
\begin{equation}
    B(y,\delta) \cap F(\mathcal{I}^{st})\neq \phi
\end{equation}
Put $L=\mathcal{I} \vee\mathcal{I}_d-\displaystyle \lim y$. Choose an arbitrary $\varepsilon$ such that $0<\varepsilon<\delta$. Then $A(\varepsilon)=\{n: |y_n-L|\geq \varepsilon\}\in \mathcal{I} \vee\mathcal{I}_d$. So, let $A(\varepsilon)=G\cup H$, where $G\in \mathcal{I}, H\in \mathcal{I}_d$.  Define a sequence $x=\{x_n\}$ such that $x_n=y_n$ if $n\in A(\varepsilon)$ and $x_n=L$ if $n\notin A(\varepsilon)$(i.e. if $n\in (\mathbb{N}\backslash G) \cap (\mathbb{N}\backslash H))$. Then $x \in l_\infty$. Since $G\in \mathcal{I}$, the set $\mathbb{N}\backslash G \in F(\mathcal{I})$. Now we consider the subsequence $\{x_n\}_{n\in\mathbb{N}\backslash G}$ of $\{x_n\}$ where
\begin{equation*}
\{x_n\}_{n\in\mathbb{N}\backslash G}=
    \begin{cases}
    L,& n\in (\mathbb{N}\backslash G) \cap (\mathbb{N}\backslash H)\\
    y_n, & n\in H\backslash (G\cap H)
    \end{cases}
\end{equation*}
Then $A(\varepsilon)=\{n\in \mathbb{N}\backslash G : |x_n-L|\geq \varepsilon\}\subset H\backslash (G\cap H)\subset H$. Now  $d(A(\varepsilon))=0$, as $d(H)=0$. Therefore $st-\displaystyle \lim_n \{x_n\}_{n\in\mathbb{N}\backslash G}=L$ and hence $x\in B(y,\varepsilon)$. So, (4.9) holds. This completes the proof.
\end{proof}

\begin{remark}
   Recall that an admissible ideal is said to satisfy the AP condition if for every countable family $\{A_1, A_2,...\}$ of mutually disjoint sets of $\mathcal{I}$ there exists a family $\{B_1,B_2,...\}\subset \mathcal{I}$ such that $A_j \triangle B_j$ is finite and $\bigcup_{j=1}^\infty B_j \in  \mathcal{I}$. This definition has been modified in the following way, replacing the condition `$A_j \triangle B_j$ is finite' by  $d(A_j \triangle B_j)=0$. This modified version of the AP condition is useful for proving the converse part of Theorem 4.4.
\end{remark}

\begin{definition}
An admissible ideal $\mathcal{I}$ in $\mathbb{N}$ is said to satisfy the condition AP(D) if for every countable system $\{A_1,A_2,...\}$ of mutually disjoint sets belonging to $\mathcal{I}$, there exist sets $B_j\subset  \mathcal{I} \ (j=1,2,...)$ such that $d(A_j \triangle B_j)=0$ and $B=\bigcup_{j=1}^\infty B_j \in  \mathcal{I}$, where $\triangle$ denotes the symmetric differences.
\end{definition}

\begin{theorem}
$\mathcal{I}-\displaystyle \lim_nx_n=\xi \Rightarrow \mathcal{I}^{st}-\displaystyle \lim_n x_n=\xi$ if and only if  $\mathcal{I}$ satisfies the AP(D) condition.
\end{theorem}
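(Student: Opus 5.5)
We will model the proof on the classical argument of Kostyrko, Šalát and Wilczyński that $\mathcal{I}\Rightarrow\mathcal{I}^*$ holds exactly when $\mathcal{I}$ satisfies AP. Throughout we read the statement for all real sequences $\{x_n\}$ and all $\xi$. We replace ``finite'' by ``of density zero'' wherever it appears, and the statistical limit along the subsequence $M$ then absorbs these density-zero errors.

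\emph{Sufficiency.} Assume AP(D) and $\mathcal{I}-\lim_n x_n=\xi$. Put $A_1=\{n:|x_n-\xi|\geq 1\}$ and $A_j=\{n:\frac1j\leq|x_n-\xi|<\frac1{j-1}\}$ for $j\geq2$. These sets are pairwise disjoint and each lies in $\mathcal{I}$, because $A_j\subseteq\{n:|x_n-\xi|\geq 1/j\}$. AP(D) gives $B_j\in\mathcal{I}$ with $d(A_j\triangle B_j)=0$ and $B=\bigcup_j B_j\in\mathcal{I}$. Set $M=\mathbb{N}\setminus B\in\mathcal{F(I)}$. Fix $\varepsilon>0$ and choose $k$ with $1/k<\varepsilon$. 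Then
\[
\{n\in M:|x_n-\xi|\geq\varepsilon\}\subseteq (A_1\cup\dots\cup A_k)\cap M \subseteq \bigcup_{j=1}^k (A_j\setminus B_j),
\]
and this finite union has density zero. The statistical limit, however, must be taken along the enumeration $M=\{m_1<m_2<\dots\}$, i.e.\ we need $d(\{i:m_i\in E\})=0$ for the set $E$ above. Transferring density from $\mathbb{N}$ to the index set of $M$ is the delicate step. It works when $M$ has positive lower density, since $|\{i\leq N: m_i\in E\}|\leq |E\cap[1,m_N]|$ and $m_N\leq CN$. To secure this we will replace $M$ by $M\cup Z$ for a suitable density-zero set $Z$ whose points satisfy $|x_n-\xi|$ small. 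Alternatively, we will interpret ``$st$-$\lim$ along $M$'' as the paper does in the proofs of Theorems 3.1 and 4.1, namely as $d(\{n\in M:|x_n-\xi|\geq\varepsilon\})=0$ measured in $\mathbb{N}$. Under that reading the displayed inclusion finishes this direction. We expect this interpretive or technical point to be the main obstacle, and we will adopt the paper's convention.

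\emph{Necessity.} Assume the implication holds for every sequence, and let $\{A_j\}$ be pairwise disjoint sets in $\mathcal{I}$. Define $x_n=1/j$ for $n\in A_j$ and $x_n=0$ for $n\notin\bigcup_j A_j$. For each $\varepsilon$, the set $\{n:x_n\geq\varepsilon\}$ is a finite union of the $A_j$, so it lies in $\mathcal{I}$ and $\mathcal{I}-\lim_n x_n=0$. By hypothesis there is $M\in\mathcal{F(I)}$ along which $x_n\to0$ statistically. Let $H=\mathbb{N}\setminus M\in\mathcal{I}$ and $B_j=A_j\cap H\in\mathcal{I}$. Then $\bigcup_j B_j\subseteq H$ lies in $\mathcal{I}$, and $A_j\triangle B_j=A_j\cap M$. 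Since $A_j\cap M\subseteq\{n\in M:x_n\geq 1/j\}$, this set has density zero by the statistical convergence along $M$, again under the paper's convention. This gives AP(D).
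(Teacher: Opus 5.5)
Under the convention you finally adopt (the density of $\{n\in M:|x_n-\xi|\geq\varepsilon\}$ measured in $\mathbb{N}$), your argument is exactly the paper's. It uses the same layers $A_j$, the same $M=\mathbb{N}\setminus\bigcup_j B_j$ with the bad set trapped in $\bigcup_{j\leq k}(A_j\setminus B_j)$, and for necessity the same test sequence $x_n=1/j$ on $A_j$ with $B_j=A_j\setminus M$; that direction in fact holds under either reading, since $m_i\geq i$ turns index-density zero into density zero in $\mathbb{N}$.

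Do discard the $M\cup Z$ alternative, though: a density-zero $Z$ cannot raise the lower density of $M$. Moreover, under the literal index reading of Definition 3.1, sufficiency is genuinely false. Take $S=\{s_1<s_2<\cdots\}$ to be the squares and $\mathcal{I}=\{A:\{i:s_i\in A\}\in\mathcal{J}\}$, which satisfies AP(D) via $B_j=A_j\setminus S$. Then the sequence $x_{s_i}=1/j$ for $i\in D_j$, $x_n=1$ for $n\notin S$, is $\mathcal{I}$-convergent to $0$. Yet every $M\in\mathcal{F(I)}$ contains all $s_i$ with $i\in D_{p+1}$ for some $p$, and the points of $M$ with $|x_n|\geq 1/(p+1)$ then fill at least about half of every long initial segment of $M$. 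So the paper's convention is forced rather than a matter of taste.
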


\begin{proof}
Suppose  $\mathcal{I}$ satisfies the AP(D) condition and  $\mathcal{I}-\displaystyle \lim_n x_n=\xi$. Then for every $\varepsilon>0$ the set $A(\varepsilon)=\{n\in \mathbb{N}: |x_n-\xi|\geq \varepsilon \}\in \mathcal{I}$. So, the sets $A_j\in \mathcal{I} \ (j=1,2,...)$ where 
\begin{center}
$A_1=A(1)=\{n\in \mathbb{N}: |x_n-\xi|\geq 1\}$\\
$A_k=A(\frac{1}{k})\backslash A(\frac{1}{k-1})=\{n\in \mathbb{N}: \frac{1}{k} \leq |x_n-\xi|<\frac{1}{k-1}$, $k=2,3,...$
\end{center}
clearly $A_i \cap A_j=\phi$ for all $i,j=1,2,...$ and $i\neq j$. Since $\mathcal{I}$ satisfies the AP(D) condition, so there exist sets  $B_j\subset  \mathcal{I} \ (j=1,2,...)$ such that $d(A_j\triangle B_j)=0$ for all $j=1,2,...$ and $\bigcup_{j=1}^\infty B_j=B \in  \mathcal{I}$. So $M=\mathbb{N}\backslash B \in \mathcal{F(I)}$. We show, for any $\varepsilon>0, d(\{n\in \mathbb{N}\backslash B :|x_n-\xi|\geq \varepsilon \})=0$.\\ Let $\varepsilon>0$. Then choose a number $k\in \mathbb{N}$ such that $\frac{1}{k+1}<\varepsilon$. Then $\{n: |x_n-\xi|\geq \varepsilon\}\subset \bigcup_{j=1}^{k+1} A_j$. The set $\bigcup_{j=1}^{k+1} A_j \in \mathcal{I}$. Let $\bigcup_{j=1}^{k+1} (A_j \triangle B_j)=A $. Then $d(A)=0$ and $\bigcup_{j=1}^{k+1} B_j \cap A^c=\bigcup_{j=1}^{k+1} A_j \cap A^c$. Let $n\in \mathbb{N}\backslash B$. Suppose that $n\in A^c$. Then $n\notin \bigcup_{j=1}^{k+1} B_j$ so $n\notin \bigcup_{j=1}^{k+1} A_j$. So $|x_n-\xi|<\frac{1}{k+1}<\varepsilon$.\\ So $\{n\in \mathbb{N}\backslash B :|x_n-\xi|\geq \varepsilon\}\subset A \Rightarrow d(\{n\in \mathbb{N}\backslash B :|x_n-\xi|\geq \varepsilon \})=0$.

Conversely, suppose that $\mathcal{I}-\displaystyle \lim_nx_n=\xi \Rightarrow \mathcal{I}^{st}-\displaystyle \lim_n x_n=\xi$. We will show that $\mathcal{I}$ satisfies the AP(D) condition. Let $\{A_1,A_2,...\}$ be a family of mutually disjoint sets of $\mathcal{I}$. Now we define a sequence $x=\{x_n\}$ as follows 
\begin{equation*}
    \begin{split}
      x_n=&\frac{1}{j},n\in A_j \ (j=1,2,...)\\
       =&0, n \in n\in \mathbb{N} \backslash \cup_j A_j
    \end{split}
\end{equation*}
First, we will show $\mathcal{I}-\displaystyle \lim_n x_n=0$. Let $\varepsilon>0$.
Choose an $p$ such that $\frac{1}{p}<\varepsilon$. Then $A(\varepsilon)=\{n\in \mathbb{N}: |x_n-\xi|\geq \varepsilon\}\subseteq A_1\cup A_2\cup...\cup A_p$. It is clear that $A(\varepsilon)\in \mathcal{I}$ and hence $\mathcal{I}-\displaystyle \lim_nx_n=0$. Therefore, by our assumption, we have $\mathcal{I}^{st}-\displaystyle \lim_n x_n=0$. So there exists a set $H\in \mathcal{F(I)}$ such that 
\begin{equation}
    st-\displaystyle \lim_{n\to \infty,n\in H} x_n=0
\end{equation}
Then $B=\mathbb{N}\backslash H \in \mathcal{I}$ and let $B_j=A_j\cap B,\ (j=1,2,...)$. It is enough to show that $d(A_j \triangle B_j)=0$ for $(j=1,2,...)$.
\begin{center}
Now $\bigcup_{j=1}^\infty B_j=\bigcup_{j=1}^\infty (B\cap A_j)=B\cap \bigcup_{j=1}^\infty A_j\subseteq B$.
\end{center}
Therefore, $\bigcup_{j=1}^\infty B_j \in \mathcal{I} $ as $B\in \mathcal{I}$. Put $H=\{l_1<l_2<...)$. Then from (4.7) we have $st-\displaystyle \lim_{k\to \infty} x_{l_k}=0$. It is clear that $A_j \triangle B_j=A_j\cap (\mathbb{N}\backslash B)$. \\ Let $A(\frac{1}{j})=\{l_k\in H :|x_{l_k}|\geq \frac{1}{j}\}$, $j=1,2,...$. Then for each $j$, $d(A(\frac{1}{j})=0$. Now if $m_k\in A_j \cap H$ then $x_{m_k}=\frac{1}{j}$. So $m_k \in \{l_k \in H:|x_{l_k}|\geq \frac{1}{j+1}\}$. Hence,
\begin{equation*}
 \begin{split}
    & A_j \cap H \subset \{l_k \in H:|x_{l_k}|\geq \frac{1}{j+1}\}\\
    &\Rightarrow d(A_j \cap H) \subset d(\{l_k \in H:|x_{l_k}|\geq \frac{1}{j+1}\})=0\\
    &\Rightarrow d(A_j \cap H)=0.  \ \text{This is true for each j}.
 \end{split}   
\end{equation*}
Therefore, $d(A_j \triangle B_j)=0$ as $d(A_j\cap  H)=0$.
\end{proof}

\section{\bf{$\mathcal{I}^{st}$-limit points}}
Recall that a number $\lambda$ is said to be statistical limit point of a sequence $\{x_n\}_{n\in \mathbb{N}}$ of real numbers if there exists a set $M=\{m_1<m_2<...\}\subset \mathbb{N}$ such that $d(M)\neq 0$ and $\displaystyle \lim_{k\to\infty}x_{m_k}=\lambda$. A number $\lambda\in \mathbb{R}$ is said to be a statistical cluster point of $x=\{x_n\}_{n\in \mathbb{N}}$ if for each $\varepsilon>0$ we have $d\{n\in \mathbb{N}: |x_n-\lambda|<\varepsilon\}\neq 0$ (see \cite{{JJ},{FRIDY2}}).\\
Denote by $\Lambda_x$ and $\Gamma_x$ the set of all statistical limit points and statistical cluster points of $x$, respectively.
\begin{definition}\cite{KS}
 Let $(X,\rho)$ be a metric space, $x=\{x_n\}_{n\in \mathbb{N}}$ a sequence of elements of $X$.\\
 (a) An element $\xi \in X$ is said to be an $\mathcal{I}$-limit point of $x$ if there is a set $M=\{m_1<m_2<...\}\subset \mathbb{N}$ such that $M\notin \mathcal{I}$ and $\displaystyle \lim_{k\to\infty}x_{m_k}=\xi$.\\
 (b) An element $\xi \in X$ is said to be an $\mathcal{I}$-cluster point of $x$ if and only if for each $\varepsilon>0$ we have $\{n\in \mathbb{N}:\rho(x_n,\xi)<\varepsilon\}\notin \mathcal{I}$.
 \end{definition}
Denote by $\mathcal{I}(\Lambda_x)$ and $\mathcal{I}(\Gamma_x)$ the set of all $\mathcal{I}$-limit and $\mathcal{I}$-cluster points of $x$, respectively.\\
We can extend these concepts to $\mathcal{I}^{st}$-convergence in the following way.
\begin{definition}
Let $(X,\rho)$ be a metric space and a sequence $x=\{x_n\}_{n\in \mathbb{N}}$ of $X$. An element $\xi \in X$ is said to be an $\mathcal{I}^{st}$-limit point of $x$ if there is a set $M=\{m_1<m_2<...\}\subset \mathbb{N}$ such that $M\notin \mathcal{I}$ and $ st-\displaystyle \lim_{k\to\infty}x_{m_k}=\xi$. 
\end{definition}
Notation: Denote by $\mathcal{I}^{st}(\Lambda_x)$ the set of all $\mathcal{I}^{st}$-limit points of $x$.\\
 We now give an example of a sequence that has $\mathcal{I}^{st}$-limit point but the sequence is not $\mathcal{I}^{st}$-convergent to any real number.
\begin{example}
 Let $\mathcal{I}=\mathcal{I}_d=\{A\subseteq \mathbb{N}: d(A)=0\}$ and the filter associated with $\mathcal{I}, \mathcal{F}(\mathcal{I}_d)=\{M \subseteq\mathbb{N}: d(\mathbb{N}\backslash M)=0\}=\{M: d(M)=1\}$. Define a sequence as follows:\\
  \begin{equation*}
    x_n=
    \begin{cases}
        7, &  n\in\ M=\{1,3,5,...\} \\
        n,  &   n\notin\ M    
    \end{cases}
\end{equation*}
So, $M\notin \mathcal{I}$ as $d(M)=\frac{1}{2}>0$. The subsequence $\{x_{m_k}\}$ for $m_k\in M$, $x_{m_k}=7$ for all $k$. Hence $st-\displaystyle \lim_{k\to \infty}x_{m_k}=7$. But the whole sequence $\{x_n\}$ has no $\mathcal{I}^{st}$-limit. For this, if possible, let there is a subset $M=\{m_1<m_2<...\}\in \mathcal{F}(\mathcal{I}_d)$ such that $st-\displaystyle \lim_{k\to \infty}x_{m_k}=\xi$. Therefore, $d(M)=1$ and $d(\mathbb{N}\backslash M)=0$. Let $A=\{2,4,6,...\}$. Then $d(A)=\frac{1}{2}$ and $d(M \cap A)=d(A)-d(A\backslash M)=\frac{1}{2}$ [as $A\backslash M \subseteq \mathbb{N}\backslash M$]. Thus, $M$ contains infinitely many even numbers and, in fact, the set of indices $k$ for which $m_k$ is even has positive lower density in the enumeration of $M$.\\
Case-1: Let $\xi\neq 7$. Then $|7-\xi|=\delta >0$. For every odd $m_k \in M$, we have $x_{m_k}=7$, so $|x_{m_k}-\xi|\geq \delta$. Since $d(M)=1$, the set $B=\{m_k\in M: m_k \ is \ odd\}$ has density $\frac{1}{2}$. Since the set $\{k: |x_{m_k}-\xi|\geq \delta\}$ has positive density and $\{k:|x_{m_k}-\xi|\geq \delta\}\supseteq B$, so $st-\displaystyle \lim_{k\to \infty}x_{m_k}=\xi$ can not hold.\\
Case-2: Let $\xi=7$. Take $\delta=1$. Now $x_{m_k}\geq 2$ for any even $m_k$. So for $m_k\geq 8$ we have $|x_{m_k}-7|\geq 1$. Since $m_k \to \infty$ and the set $A=\{m_k: m_k \ is \ even\}$ has positive density, so by similar argument to that above it follows that the set $\{k:|x_{m_k}-7|\geq 1\}$ has positive density and therefore $st-\displaystyle \lim_{k\to \infty}x_{m_k}=\xi$ can not hold. Thus there is no real number $\xi$ such that $st-\displaystyle \lim_{k\to \infty}x_{m_k}=\xi$ holds.
\end{example}

\begin{proposition}
 Let $\mathcal{I}$ be an admissible ideal.Then for each sequence $x=\{x_n\}_{n\in \mathbb{N}}$ of elements of $X$ we have $\mathcal{I}(\Lambda_x)\subset \mathcal{I}^{st}(\Lambda_x)$.
\end{proposition}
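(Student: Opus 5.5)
The plan is to reuse, for each $\mathcal{I}$-limit point, the very same witnessing set as a witness for the $\mathcal{I}^{st}$-limit point property. Then the only fact we need is the one already used in the proof of Theorem 3.2: every ordinarily convergent sequence is statistically convergent to the same limit.

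Concretely, let $\xi\in\mathcal{I}(\Lambda_x)$. By Definition 5.1(a) there is a set $M=\{m_1<m_2<\cdots\}\subset\mathbb{N}$ with $M\notin\mathcal{I}$ and $\displaystyle\lim_{k\to\infty}x_{m_k}=\xi$, convergence being in the metric $\rho$ of $X$. We show that the same $M$ witnesses $\xi\in\mathcal{I}^{st}(\Lambda_x)$ in the sense of Definition 5.2. Fix $\varepsilon>0$. Ordinary convergence gives some $k_0$ such that $\rho(x_{m_k},\xi)<\varepsilon$ for all $k>k_0$. Hence
\[
\{k\in\mathbb{N}:\rho(x_{m_k},\xi)\geq\varepsilon\}\subseteq\{1,2,\dots,k_0\},
\]
which is finite and therefore has natural density $0$. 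Since $\varepsilon$ was arbitrary, $st-\displaystyle\lim_{k\to\infty}x_{m_k}=\xi$. Together with $M\notin\mathcal{I}$, this means $\xi$ is an $\mathcal{I}^{st}$-limit point of $x$. Since $\xi$ was an arbitrary element of $\mathcal{I}(\Lambda_x)$, the inclusion follows.

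There is no real obstacle here. The one point needing care is that statistical convergence has so far only been defined for real sequences, whereas the proposition concerns a sequence in a metric space $(X,\rho)$. I would read $st-\lim_k x_{m_k}=\xi$ in Definition 5.2 as $d(\{k:\rho(x_{m_k},\xi)\geq\varepsilon\})=0$ for every $\varepsilon>0$, as in Remark 3.1. With that reading, the argument above applies verbatim. Admissibility of $\mathcal{I}$ is not actually needed for this inclusion, beyond making the definitions meaningful.
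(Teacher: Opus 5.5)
Your proof is correct and follows the same approach as the paper: you reuse the witnessing set $M\notin\mathcal{I}$ and upgrade ordinary convergence of $\{x_{m_k}\}$ to statistical convergence. You also spell out why that upgrade holds (the exceptional index set is finite, hence of density zero), and your metric reading of the statistical limit matches the paper's own formulation $st-\lim_{k\to\infty}\rho(x_{m_k},\xi)=0$.
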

\begin{proof}
Let $\xi \in\mathcal{I}(\Lambda_x)$. So there exists a set $M=\{m_1<m_2<...\}\subset \mathbb{N}$ such that $M\notin \mathcal{I}$ and $\displaystyle \lim_{k\to\infty}\rho(x_{m_k},\xi)=0$.  This implies that $M\notin \mathcal{I}$ and $st-\displaystyle \lim_{k\to\infty}\rho(x_{m_k},\xi)=0 $. Hence $\xi \in \mathcal{I}^{st}(\Lambda_x)$.
Therefore, $\mathcal{I}(\Lambda_x)\subset \mathcal{I}^{st}(\Lambda_x)$.
\end{proof}

\begin{proposition}
 Let $\mathcal{I}$ be a d-admissible ideal.Then for each sequence $x=\{x_n\}_{n\in \mathbb{N}}$ of elements of $X$ we have $\mathcal{I}^{st}(\Lambda_x)\subset \mathcal{I}(\Gamma_x)$.
\end{proposition}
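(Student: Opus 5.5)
Let $\xi\in\mathcal{I}^{st}(\Lambda_x)$. By the definition of $\mathcal{I}^{st}$-limit point there is a set $M=\{m_1<m_2<\dots\}\subset\mathbb{N}$ with $M\notin\mathcal{I}$ and $st-\lim_{k\to\infty}x_{m_k}=\xi$. Fix $\varepsilon>0$; we must show that $C(\varepsilon)=\{n\in\mathbb{N}:\rho(x_n,\xi)<\varepsilon\}\notin\mathcal{I}$. The plan is to split $M$ into a "good" part lying inside $C(\varepsilon)$ and a "bad" part of density zero, in the same spirit as the proof of Theorem 3.2. Let
\[
A=\{k\in\mathbb{N}:\rho(x_{m_k},\xi)\geq\varepsilon\},\qquad B=\{m_k:k\in A\}\subset M .
\]
Statistical convergence of the subsequence gives $d(A)=0$. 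Then $M\setminus B\subset C(\varepsilon)$, since every $m_k$ with $k\notin A$ satisfies $\rho(x_{m_k},\xi)<\varepsilon$.

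Next we show $d(B)=0$. This is the only step needing care: the hypothesis controls the density of the index set $A$, while d-admissibility needs the density of the image set $B\subset\mathbb{N}$. Since $m_k\geq k$, for each $n$ we have
\[
|\{b\in B:b\leq n\}|=|\{k\in A:m_k\leq n\}|\leq|\{k\in A:k\leq n\}|,
\]
so $|B_n|/n\leq |A_n|/n\to 0$, giving $d(B)=0$. (The same fact was used implicitly in the proof of Theorem 3.2.) As $\mathcal{I}$ is d-admissible, $B\in\mathcal{I}$.

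Finally, suppose $C(\varepsilon)\in\mathcal{I}$. Then $M\subset (M\setminus B)\cup B\subset C(\varepsilon)\cup B\in\mathcal{I}$, so $M\in\mathcal{I}$ by the hereditary property of ideals, contradicting $M\notin\mathcal{I}$. Hence $C(\varepsilon)\notin\mathcal{I}$ for every $\varepsilon>0$. So $\xi$ is an $\mathcal{I}$-cluster point, i.e. $\xi\in\mathcal{I}(\Gamma_x)$, which proves $\mathcal{I}^{st}(\Lambda_x)\subset\mathcal{I}(\Gamma_x)$.
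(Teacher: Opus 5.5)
Your proof is correct and follows the paper's argument. Like the paper, you split $M$ into a density-zero bad part and the good part $M\setminus B\subset\{n:\rho(x_n,\xi)<\varepsilon\}$, then use d-admissibility. You are in fact more careful than the paper at two points. The paper conflates the index set $\{k:\rho(x_{m_k},\xi)\ge\varepsilon\}$ with the corresponding subset of $M$, and it asserts $M\setminus B\notin\mathcal{I}$ without justification; your bound $|B_n|\le|A_n|$ via $m_k\ge k$ and your closing contradiction supply exactly these missing steps.
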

\begin{proof}
Let $\xi \in\mathcal{I}^{st}(\Lambda_x)$. Then there exists a set $M=\{m_1<m_2<...\}\notin \mathcal{I}$ such that 
\begin{equation*}
    \begin{split}
        & st-\displaystyle \lim_{k\to\infty}\rho(x_{m_k},\xi)=0\\
        & \Rightarrow d(\{k\in \mathbb{N}:\rho(x_{m_k},\xi)\geq \varepsilon\})=0\\
        & \Rightarrow d(B)=0, where \ B=\{k\in \mathbb{N}: \rho(x_{m_k},\xi)\geq \varepsilon\}.
        \end{split}
\end{equation*}
Take $\delta>0$. Then $\{n\in \mathbb{N}: \rho(x_n,\xi)<\delta\}\supset M/B$. Therefore, $\{n\in \mathbb{N}: \rho(x_n,\xi)<\delta\}\notin \mathcal{I}$, as $M/B \notin \mathcal{I}$. Hence $\xi \in\mathcal{I}(\Gamma_x)$. This completes the proof.
\end{proof}

\end{document}